\numberwithin{equation}{section}
\newtheorem{Theorem}{Theorem}[section]
\newtheorem{Proposition}[Theorem]{Proposition}
 { \theoremstyle{definition}
\newtheorem{Definition}[Theorem]{Definition}
\newtheorem{Example}[Theorem]{Example}
\newtheorem{Remark}[Theorem]{Remark} }
\begin{document}
\allowdisplaybreaks

\newcommand{\arXivNumber}{1907.02925}

\renewcommand{\PaperNumber}{065}

\FirstPageHeading

\ShortArticleName{Solvable Lie Algebras of Vector Fields and a Lie's Conjecture}

\ArticleName{Solvable Lie Algebras of Vector Fields\\ and a Lie's Conjecture}

\Author{Katarzyna GRABOWSKA~$^\dag$ and Janusz GRABOWSKI~$^\ddag$}

\AuthorNameForHeading{K.~Grabowska and J.~Grabowski}

\Address{$^\dag$~Faculty of Physics, University of Warsaw, Poland}
\EmailD{\href{mailto:katarzyna.grabowska@fuw.edu.pl}{katarzyna.grabowska@fuw.edu.pl}}

\Address{$^\ddag$~Institute of Mathematics, Polish Academy of Sciences, Poland}
\EmailD{\href{mailto:jagrab@impan.pl}{jagrab@impan.pl}}

\ArticleDates{Received February 04, 2020, in final form July 02, 2020; Published online July 10, 2020}

\Abstract{We present a local and constructive differential geometric description of finite-dimensional solvable and transitive Lie algebras of vector fields. We show that it implies a Lie's conjecture for such Lie algebras. Also infinite-dimensional analytical solvable and transitive Lie algebras of vector fields whose derivative ideal is nilpotent can be adapted to this scheme.}

\Keywords{vector field; nilpotent Lie algebra; solvable Lie algebra; dilation; foliation}

\Classification{17B30; 17B66; 57R25; 57S20}

\section{Introduction}

Sophus Lie began to study the so-called continuous groups around 1870. He was led to these objects by two motives. One was to extend to differential equations the results of Galois theory for algebraic equations; the second to study the transformation groups of general geometrical structures, in particular, to study and extend the properties of the group of contact transformations and ``canonical'' transformations, i.e., groups that arose in differential geometry and classical mechanics. We should remark, however, that the
objects that Lie studied are not groups in the present day sense of the word. They are families of diffeomorphisms, locally defined on a~manifold, closed under the operations of composition and inverse whenever these are defined. These objects are today called pseudogroups.

One instance of these pseudogroups were the ``finite continuous groups'' which can be regarded as representations by diffeomorphisms of abstract finite dimensional Lie groups (or restrictions thereof). The theory of these groups was studied in great detail by Killing, Cartan, Weyl, etc., and the theory of Lie groups is now a standard part of the mathematical curriculum.

In the early days of Lie theory, the problem of classifying Lie algebras of vector
fields under local diffeomorphisms played a central role in the subject, notably
because of the applications to~the integration of differential equations admitting
infinitesimal symmetries. Lie himself classified the Lie algebras of vector fields in
one real variable, one complex variable and two complex variables \cite{Lie2,Lie1} (see also \cite{92}).
We should however note that Lie, like all the geometers
of the nineteenth century, dealt only with the analytic case and did not
consider the $C^\infty$ case.

In Lie theory and representation theory, the \emph{Levi decomposition}, conjectured by Killing and Cartan and proved by Eugenio Elia Levi (1905), states that any finite-dimensional real Lie algebra $\mathfrak{g}$ is the semidirect product of a solvable ideal and a semisimple subalgebra. One is its \emph{radical}, a maximal solvable ideal, and the other is a semisimple subalgebra, called a \emph{Levi subalgebra}. The Levi decomposition implies that any finite-dimensional Lie algebra is a semidirect product of a~solvable Lie algebra and a~semisimple Lie algebra, it is enough to classify separately semisimple and solvable Lie algebras.

Every semisimple Lie algebra over an algebraically closed field of characteristic 0 is a direct sum of simple Lie algebras (by definition), and the finite-dimensional simple Lie algebras fall in four families: $A_n$, $B_n$, $C_n$, and $D_n$, with five exceptions $E_6$, $E_7$, $E_8$, $F_4$, and $G_2$. Simple Lie algebras are classified by the connected Dynkin diagrams, while semisimple Lie algebras correspond to not necessarily connected Dynkin diagrams, where each component of the diagram corresponds to a summand of the decomposition of the semisimple Lie algebra into simple Lie algebras. Semisimple Lie algebras over the field of complex numbers ${\mathbb C}$
have been completely classified by E.~Cartan \cite{Cartan1894}, over the field of
real numbers ${\mathbb R}$ by F.~Gantmacher \cite{Gantmacher39}.

To classify all finite-dimensional Lie algebras it would be therefore sufficient to classify solvable algebras. Unfortunately, the problem of classification of all solvable (including nilpotent) Lie algebras in
an arbitrarily large finite dimension is presently unsolved and is generally believed to be unsolvable.
All known full classifications terminate at relatively low dimensions, e.g., the classification of nilpotent
algebras is available at most in dimension eight \cite{10,11}, for the solvable ones in dimension six \cite{12,13}.
A direct construction of realizations of low-dimensional Lie algebras from their structure constants can be found in \cite{Popovych03}. Projectable Lie algebras of vector fields in (complex) dimension three i.e., Lie algebras on ${\mathbb C}^2\times {\mathbb C}$ which project on ${\mathbb C}^2$ were studied recently in \cite{Schneider18}.

Realizations of Lie algebras in vector fields are especially interesting. The interest in the local classification problem for the Lie algebras of vector fields comes from applications in the general theory
of differential equations, Lie systems, in group classification of ODEs and PDEs, in geometric control theory, in the theory of systems with superposition principles, etc.
Some results on classification of transitive local realizations, together with powerful methods
of explicit computation are already present in the literature \cite{Blattner69,Draisma12,GS64,SW014}.
A lot of information, especially about the solvable Lie algebras of vector fields, one can find in survey \cite{SW014} and the references therein. Transitive solvable Lie algebras of vector fields have been also studied in the context of integrability by quadratures \cite{CFGR15,CFG16}. A purely algebraic and very instructive approach that relates realizations of a Lie algebra with its subalgebras has been proposed in \cite{Magazev15}.

As dealing with vector fields on manifolds, we will work exclusively with Lie algebras over reals.
Probably the easiest way of constructing a nilpotent Lie algebra of vector fields is to consider vector fields with negative degree with respect to a positive dilation on ${\mathbb R}^n$. To be more precise, let us associate with the coordinates $\big(x^1,\dots,x^n\big)$ positive integer degrees (weights) $(w_1,\dots,w_n)$. This is equivalent to picking up
the \emph{weight vector field}
\[
\nabla^h=\sum_iw_ix^i\partial_{x^i} ,
\] or a one-parametric family of positive \emph{dilations} (called in \cite{Bruce:16, Grabowski:2006,Grabowski:2012}, a \emph{homogeneity structure})
\[
h_t\big(x^1,\dots,x^n\big)=\big(t^{w_1}x^1,\dots,t^{w_n}x^n\big) .
\]
The biggest $w_i$ is called the \emph{degree} of the homogeneity structure. The weight vector field defines \emph{homogeneous functions of degree~$k$} as those $f\in C^\infty\big({\mathbb R}^n\big)$
for which $\nabla^h(f)=kf$. One can prove~\cite{Grabowski:2012} that only non-negative homogeneity degrees are allowed and that each homogeneous function is polynomial in $\big(x^1,\dots,x^n\big)$. This can be extended to tensors, e.g., a~vector field $X$ is homogeneous of degree $k$ (what we denote by $X\in\mathfrak{g}^k(h)$) if $\big[\nabla^h,X\big]=kX$. This time, $k$ may be negative, for instance $\deg\big(\partial_{x^i}\big)=-w_i$ while $\deg\big(x^j\partial_{x^i}\big)=w_j-w_i$.
Since $\deg(X)\ge -w(h)=\min\{-w_1,\dots,-w_n\}$ and, for homogenous $X$, $Y$,
\[ \deg([X,Y])=\deg(X)+\deg(Y) ,\]
the family of vector fields of negative degrees
\[ \mathfrak{g}^{<0}(h)=\bigoplus_{i=-w(h)}^{-1}\mathfrak{g}^i(h)\]
is a finite-dimensional nilpotent Lie algebra (cf.~\cite{JG90,MK88}).
This (except for finite-dimensionality) remains valid for general graded supermanifolds~\cite{Voronov10}, in particular graded bundles in the sense of~\cite{Grabowski:2012}.

It is an interesting observation that any transitive nilpotent Lie algebra $L$ of vector fields defined on a neighbourhood of $0\in{\mathbb R}^n$ is locally a subalgebra of $\mathfrak{g}^{<0}(h)$ for some dilation $h$ on~${\mathbb R}^n$ \cite{JG90} (cf.\ also~\cite{MK88}, where the assumptions are a little bit stronger). The nilpotent algebra~$L$ is therefore automatically finite-dimensional and polynomial in appropriate homogeneous coordinates $\big(x^1,\dots,x^n\big)$. The homogeneity structure is uniquely defined by the structure of $L$, so that $\mathfrak{g}^{<0}(h)$ can be viewed as nilpotent `enveloping' or `prolongation' of $L$. Note also that every finite-dimensional (real) Lie algebra is a transitive Lie algebra of vector fields as the Lie algebra of left-invariant vector fields on the corresponding 1-connected Lie group.

The homogeneity structures defined in \cite{Grabowski:2012} are a little bit more general: they admit coordinates of degree $0$, so that the manifold is a fibration over the quotient manifold corresponding to zero-degree coordinates (graded bundle). In particular, graded bundles of degree 1 are just vector bundles.
The family of vector fields $\mathfrak{g}^{<0}(h)$ is still nilpotent, but it is a subalgebra in the family of vector fields with non-positive degrees,
\[ \mathfrak{g}^{\le 0}(h)=\bigoplus_{i=-w(h)}^{0}\mathfrak{g}^i(h) .\]
The latter is generally not solvable, but any finite-dimensional Lie algebra $L$ in $\mathfrak{g}^{\le 0}(h)$ such that $[L,L]\subset\mathfrak{g}^{< 0}(h)$ is solvable.
This is because, for finite-dimensional Lie algebras, $L$ is solvable if and only if $[L,L]$ is nilpotent (this is not true in infinite dimensions). In other words the \emph{nilradical} $\mathfrak{nr}(L)$ of $L$ contains $[L,L]$. Solvable Lie algebras of vector fields in $\mathfrak{g}^{\le 0}(h)$ we will call \emph{dilational} (with respect to~$h$).

If we distinguish coordinates $x^1,\dots,x^p$ of zero degree (weight) and coordinates of positive degree (weight) $z^1_{w_1},\dots,z^q_{w_q}$, where $w_i>0$ indicates the weight, then locally the homogeneity structure (dilation)
looks like (see \cite{Grabowski:2012})
\[ h_t(x,z)=\big(x^1,\dots,x^p,t^{w_1}z^1_{w_1},\dots,t^{w_q}z^q_{w_q}\big)\]
and smooth functions of degree $\le w$ are polynomials with coefficients in functions depending on~$x$:
\[ f(x,y)=\sum_{|\alpha|\le w}f_\alpha(x)\big(z_{w_1}^1\big)^{\alpha_1}\cdots \big(z_{w_q}^q\big)^{\alpha_q},
\]
where for $\alpha\in{\mathbb N}^q$, we put $|\alpha|=\sum_iw_i\alpha_i$. The method of defining dilational transitive solvable Lie algebras of vector fields is illustrated by the following example.

\begin{Example}\label{1.1}
On ${\mathbb R}^4$ with local coordinates $(x,y,z,u)$ we define $h$ declaring that $x$ is of weight~0, $y$ is of weight 1, $z$ is of weight~2, $u$ is of weight~3. The nilpotent part $\mathfrak{g}^{< 0}(h)$ is a module over ${\mathbb A}=C^\infty({\mathbb R})=\{ f(x)\}$ generated by polynomial vector fields
\begin{gather*} \mathfrak{g}^{-3}(h)={\mathbb A}[ \partial_u ], \qquad \mathfrak{g}^{-2}(h)={\mathbb A}[ y\partial_u, \partial_z ], \qquad
\mathfrak{g}^{-1}(h)={\mathbb A}\big[ z\partial_u, y^2\partial_u, y\partial_z, \partial_y \big].
\end{gather*}
We can now choose a commutative algebra of vector fields of weight 0 completing the above to a transitive algebra,
e.g.,
\[ \partial_x, \ y\partial_y, \ z\partial_z, \ u\partial_u ,\]
 and choose dilational solvable transitive Lie algebras of vector fields as transitive
subalgebras of
\begin{gather}\label{ida}\langle \partial_x, y\partial_y, z\partial_z, u\partial_u\rangle\oplus{\mathbb A}\left[ \partial_u, y\partial_u, \partial_z, \partial_u, z\partial_u, y^2\partial_u, y\partial_z, \partial_y \right].
\end{gather}
\end{Example}

Our aim in this note is to show that any transitive solvable finite-dimensional Lie algebra~$L$ of vector fields, defined locally in a~neighbourhood of $0\in{\mathbb R}^n$, is dilational, i.e., it is a~subalgebra of~$\mathfrak{g}^{\le 0}(h)$, with $[L,L]\subset\mathfrak{g}^{< 0}(h)$, for a homogeneity structure $h$ on ${\mathbb R}^n$ and appropriate homogeneous coordinates.
This is of course far from fully classifying solvable Lie algebras of vector fields, but it gives a nice structural information about $L$. This also allows for an easy way of constructing solvable Lie algebras of vector fields. Note also that our methods are constructive and serve as well for a large variety of infinite-dimensional Lie algebras of vector fields, especially in the analytical context.

We prove also the Lie's conjecture for solvable Lie algebras. The conjecture states that for any finite-dimensional complex transitive Lie algebra $L$ of vector fields in ${\mathbb R}^n$ one can find coordinates $y^1,\dots ,y^n$ in which all coefficients of vector fields from $L$ lie in the algebra generated by the~$y^i$ and the exponentials $\exp\big(\lambda y^i\big)$.

\section{Nilpotent Lie algebras of vector fields}
Our main result slightly depends on the fact whether $L$ itself is nilpotent or not. If yes, then we recover the result proved already in~\cite{JG90}. We will reformulate this result as follows.

The following is well known. Let $L$ be a Lie algebra. We define the lower central series of $L$ inductively:
\[ L=L^1 ,\qquad L^{i+1} = \big[L,L^i\big] \qquad \text{for}\quad i\ge 1 .\]
\begin{Definition} If the lower central series \emph{stops at zero}, i.e.,
$L^{k+1}=\{ 0\}$ for some $k$, then we say that~$L$ is \emph{nilpotent}.
The smallest such $k$ we call the \emph{height} of the nilpotent Lie algebra.
\end{Definition}
Let us recall from the introduction that on ${\mathbb R}^n$ we consider a positive dilation, i.e., an action~$h$ of the multiplicative monoid $({\mathbb R},\cdot)$
of the form
\[ h_t\big(x^1,\dots,x^n\big)=\big(t^{w_1}x^1,\dots,t^{w_n}x^n\big) ,\qquad t\in{\mathbb R} ,\]
where $w_i\in{\mathbb Z}$, $w_i>0$, $w(h)=\max\{w_1,\ldots,w_n\}.$ The associated \emph{weight vector field} reads $\nabla^h=\sum_iw_ix^i\partial_{x^i}$.
A smooth function $f$ defined in a neighbourhood of $0\in{\mathbb R}^n$ is homogeneous of degree $a$ if
$\nabla^h(f)=af$. Note that only non-negative integer homogeneity degrees are possible.
Similarly, a vector field $X$ is homogeneous of degree $a$ if
$\big[\nabla^h,X\big]=aX$, however now negative degrees not less than $-w(h)$ are allowed.
The family of homogeneous vector fields (of degree~$a$) is denoted by~$\mathfrak{g}^a(h)$.

We observe that the family of vector fields of negative degrees
\[ \mathfrak{g}^{<0}(h)=\bigoplus_{i=-w(h)}^{-1}\mathfrak{g}^i(h)\]
is a transitive (finite-dimensional) nilpotent Lie algebra of vector fields (cf.~\cite{JG90,MK88}).
The transitivity means that the vector fields of negative degrees span ${\mathsf T} {\mathbb R}^n$.
Of course, any transitive subalgebra of $\mathfrak{g}^{<0}(h)$ is again transitive nilpotent Lie algebra of vector fields. We can, of course, restrict $\mathfrak{g}^{<0}(h)$ to a neighbourhood $U$ of $0\in{\mathbb R}^n$ which gives a Lie algebra $\mathfrak{g}^{<0}(U,h)$ isomorphic to $\mathfrak{g}^{<0}(h)$. In this case we will speak about a \emph{local Lie algebra of vector fields}.
\begin{Example}
Consider the dilation $h$ on ${\mathbb R}^2=\{(y,z)\}$ such that $y$ is of degree 1 and $z$ is of degree 2. The Lie algebra of vector fields with negative degrees is spanned by $\langle\partial_y,\partial_z,y\partial_z\rangle$ which is transitive nilpotent of height 2 since $L^2=[L,L]=\langle \partial_z\rangle$.
\end{Example}

For a nilpotent local Lie algebra $L$ of vector fields on ${\mathbb R}^n$ of height $k$ we consider the descending series of finite-dimensional vector spaces $L^i(0)\subset{\mathsf T}_0{\mathbb R}^n$, spanned by values at $0\in{\mathbb R}^n$ of vector fields belonging to $L^i$,
\begin{gather*}
L(0)=L^1(0)={\mathsf T}_0{\mathbb R}^n\supset L^2(0)\supset \cdots\supset L^{k}(0)\supset L^{k+1}(0)=\{ 0\} .
\end{gather*}
Let us rewrite this sequence, collecting terms of the same dimension of the space and indicating this dimension by the lower indices:
\begin{gather*}
 \underbrace{L^1_n(0)=\cdots= L^{r_1}_n(0)}_{r_1}\supsetneq
\underbrace{L^{r_1+1}_{n-b_1}(0)=\cdots= L^{r_2}_{n-b_1}(0)}_{r_2-r_1}\supsetneq
\cdots\supsetneq\\
\qquad{} \supsetneq\underbrace{L^{r_{m-1}+1}_{n-b_{m-1}}(0)=\cdots= L^{r_m}_{n-b_{m-1}}(0)}_{r_m-r_{m-1}}
\supsetneq \{ 0\} .
\end{gather*}
Put $r_0=0$, $b_0=0$, $a_0=0$, $b_m=n$, and $a_i=b_i-b_{i-1}$ for $i>0$. Of course, $a_1+\cdots+a_m=n$.
The main result in \cite{JG90} can be reformulated as follows.
\begin{Theorem}\label{nil}
Let $L$ be a transitive local nilpotent Lie algebra of vector fields on ${\mathbb R}^n$ and let~$r_i$,~$b_i$ and~$a_i$, $i=0,\dots,m$, be defined as above. There exist local variables $\big(y^1,\dots,y^n\big)$ in a~neighbourhood $U$ of $0\in{\mathbb R}^n$ such that $y^1,\dots, y^{b_j}$ vanish on $L^{r_j+1}$ and $\partial_{y^{b_j+1}},\dots,\partial_{y^{b_{j+1}}}$ span $L^{r_{j}+1}(0)/L^{r_{j+1}}(0)$, $j=1,\dots,m-1$. Let $h$ be a positive dilation of ${\mathbb R}^n$ such that the first $a_1$ variables, $y^1,\dots,y^{a_1}$, are of degree $r_1>0$, the next $a_2$ variables are of degree~$r_2$, etc., up to final~$a_m$ variables, $y^{b_{m-1}+1},\dots,y^n$ which are of degree $r_{m}$.
Then, with respect to this dilation, $L\subset\mathfrak{g}^{<0}(U,h)$ and
\[ L^j\subset \bigoplus_{i=-k}^{-j}\mathfrak{g}^i(U,h) ,\]
where $k$ denotes the height of~$L$.
In particular, the algebra $L$ is finite-dimensional and polynomial in the chosen coordinates.
\end{Theorem}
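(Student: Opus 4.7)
The plan is to construct adapted coordinates by iterated Frobenius along the lower central series, and then extract the weight structure from the bracket relations via a downward induction. First, for each $j=1,\dots,m$ consider on a neighbourhood $U$ of $0$ the distribution $D_j\subset TU$ generated by the ideal $L^{r_j+1}\subset L$. Involutivity is automatic since $[L^{r_j+1},L^{r_j+1}]\subset L^{r_j+1}$. To see that $D_j$ has constant rank $n-b_j$ near $0$, combine the equality $\dim D_j(0)=n-b_j$ with the observation that the flow $\varphi_t^Y$ of any $Y\in L$ preserves the filtration $L\supset L^2\supset\cdots$, because $(\varphi_t^Y)_*=\exp(-t\operatorname{ad}_Y)$ and $\operatorname{ad}_Y$ is nilpotent (Engel). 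The flow therefore carries $L^{r_j+1}(x)$ isomorphically onto $L^{r_j+1}(\varphi_t^Y(x))$, and transitivity of $L$ near $0$ makes the rank locally constant throughout $U$. Frobenius now supplies nested foliations $\mathcal{F}_1\supset\cdots\supset\mathcal{F}_m$ with $\dim\mathcal{F}_j=n-b_j$.

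Second, I would build the coordinate system in blocks matching the indices $b_j$. Take $y^1,\dots,y^{b_1}$ as pull-backs of local coordinates on the leaf space of $\mathcal{F}_1$. On a leaf $\Lambda_1\ni 0$ of $\mathcal{F}_1$ the restriction $\mathcal{F}_2|_{\Lambda_1}$ is again a foliation; pick $y^{b_1+1},\dots,y^{b_2}$ as transverse coordinates there, extended to $U$ by being constant along $\mathcal{F}_1$-leaves. Iterating down, one reaches the bottom leaf $\Lambda_{m-1}\ni 0$ on which $L^{r_m}=L^{r_{m-1}+1}$ is abelian (since $[L^{r_m},L^{r_m}]\subset L^{2r_m}=0$) and transitive; its commuting elements can be simultaneously straightened to $\partial_{y^{b_{m-1}+1}},\dots,\partial_{y^n}$ on $\Lambda_{m-1}$, then extended off $\Lambda_{m-1}$ via a compatible transversal. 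In the resulting chart, $D_j=\langle\partial_{y^{b_j+1}},\dots,\partial_{y^n}\rangle$ pointwise, whence $L^{r_j+1}$ kills $y^1,\dots,y^{b_j}$ and $\partial_{y^{b_j+1}}|_0,\dots,\partial_{y^{b_{j+1}}}|_0$ realise the stated basis of $L^{r_j+1}(0)/L^{r_{j+1}+1}(0)$. Assigning weight $r_j$ to $y^i$ for $b_{j-1}<i\le b_j$ defines the dilation $h$.

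It remains to show that for $X\in L^{r_j+1}$, written as $X=\sum_{l>b_j}X(y^l)\,\partial_{y^l}$, each coefficient $X(y^l)$ is a polynomial in the $y^i$ of weight at most $w_l-(r_j+1)$. I would prove this by downward induction on $j$, starting from $j=m-1$: there the straightening forces the coefficients to be constants, hence polynomials of weight $0$. For the inductive step one uses $[L,L^{r_j+1}]\subset L^{r_j+2}\subset L^{r_{j+1}+1}$. Thus for each $Y\in L$ whose coefficients are already known to be polynomials of controlled weight, $[Y,X](y^l)=Y(X(y^l))-X(Y(y^l))$ is a known polynomial; letting $Y$ range over a set of elements of $L$ whose values at $0$ span a complement to $L^{r_j+1}(0)$ in $T_0\mathbb{R}^n$, one reconstructs every transverse derivative of $X(y^l)$ as a polynomial of the expected weight. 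Since $L$ has height $k$, the recursion terminates after at most $k$ steps, and Taylor's formula together with the already-constructed values along $\mathcal{F}_j$-leaves forces $X(y^l)$ to be a polynomial of weight at most $w_l-(r_j+1)$.

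The main obstacle is the weight bookkeeping in this downward recursion: one must verify that each bracketing step raises the polynomial weight of the computed coefficient by \emph{exactly} the weight of $Y$, so that the cumulative bound closes at the sharp value $w_l-(r_j+1)$ rather than at a crude polynomial estimate. Once that is in hand, $L$ embeds in the finite-dimensional graded space $\mathfrak{g}^{<0}(U,h)$, and the remaining assertions — finite-dimensionality, polynomiality in the $y^i$, and the inclusions $L^j\subset\bigoplus_{i=-k}^{-j}\mathfrak{g}^i(U,h)$ — follow immediately.
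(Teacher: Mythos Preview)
The paper does not prove this theorem in the body of the text; it is quoted as a reformulation of the main result of \cite{JG90}. The closest thing to a proof inside the paper is the construction carried out for the solvable case in Proposition~\ref{p1} and Theorem~\ref{main}, and that is what your attempt should be compared against.

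Your Frobenius-and-extension construction of the $y^i$ is weaker than what the paper actually does, and this is exactly why your ``main obstacle'' does not close. In Proposition~\ref{p1} the coordinates are not obtained by generic Frobenius charts: one first picks $Y_1,\dots,Y_{b_1}\in L^{r_1}$ spanning $L^{r_1}(0)\bmod L^{r_2}(0)$, defines $1$-forms $\alpha^l$ vanishing on the deeper distribution with $\alpha^l(Y_s)=\delta^l_s$, checks $\mathrm{d}\alpha^l=0$ using $[L^{r_1},L^{r_1}]\subset L^{r_1+1}$, and then sets $y^l$ to be primitives. Iterating leaf-by-leaf produces coordinates with the exact normal form $Y_k=\partial_{y^k}\bmod \mathcal{F}^{r_{j+1}}$ (equation~\eqref{gen}), and in particular the last block consists of honest coordinate vector fields $\partial_{y^{b_{m-1}+1}},\dots,\partial_{y^n}\in L$. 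The downward induction in Theorem~\ref{main} then brackets $X$ with these $\partial_{y^r}$ themselves, so each step reads off a genuine partial derivative of a coefficient and lands in a known $L^{r_s}$; no matrix inversion is needed and the weight bound $\deg g_j\le r_m$ follows directly.

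In your scheme the transverse vector fields are only elements $Y\in L$ with $Y(0)$ spanning a complement; their coefficient matrix in the $\partial_{y^i}$ is merely invertible near $0$, and inverting it to ``reconstruct every transverse derivative'' introduces rational, not polynomial, functions of the $y^i$. So even if $[Y,X](y^l)$ is polynomial of the right weight by induction, you cannot conclude the same for $\partial_{y^i}\big(X(y^l)\big)$, and the sharp bound $w_l-(r_j+1)$ is lost. (Your coordinate extension ``constant along $\mathcal{F}_1$-leaves'' is also not what you want: those functions live on a single leaf $\Lambda_1$ and must be propagated \emph{transversally}, e.g.\ along a section, not along the leaves.) The fix is precisely the closed $1$-form trick above, which upgrades your Frobenius chart to one in which chosen elements of $L$ are coordinate vector fields modulo the next foliation; once you have that, your inductive outline goes through and matches the paper's argument.
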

\begin{Remark}
Note that the dilation above, so $\mathfrak{g}^{<0}(U,h)$, is completely determined (up to a~diffeomorphism) by the dimensions of $L^i(0)$, so the algebra of vector fields $L$.
\end{Remark}
\begin{Example}
Consider the Lie algebra of vector fields on ${\mathbb R}^2$ spanned by
$L=L^1=\big\langle\partial_x,\partial_y,y\partial_x,\allowbreak y^2\partial_x\big\rangle$. It is nilpotent of height 3: $L^2=\langle\partial_x,y\partial_x\rangle$, $L^3=\langle\partial_x\rangle$. According to the theorem, we associate with $y$ degree 1 and with $x$ degree 3, so $L$ is spanned by vector fields of degree $\le -1$.
\end{Example}
\begin{Remark} Let us now consider nilpotent Lie algebra $L$ included in
\[ \mathfrak{g}^{<0}(h_0)=\bigoplus_{i=-w(h_0)}^{-1}\mathfrak{g}^i(h_0)\]
for a given dilation $h_0$. We observe that the dilation $h$ determined by $L$ according to Theorem~\ref{nil} need not to recover $h_0$, so in general $h\ne h_0$. For instance if coordinate $x$ is of degree 0 and $y$ is of degree $2$ for $h_0$, for the commutative algebra $L=\langle\partial_x,\partial_y\rangle$ it is sufficient to take $h$ for which~$x$ and~$y$ are of degree~1.
\end{Remark}

\section{Solvable algebras: descending series of ideals and foliations}
Let $L$ be a Lie algebra. We define the \emph{nilradical series} of $L$ inductively:
\begin{gather}\label{hcs} L^0=L ,\qquad L^1=\mathfrak{nr}(L) , \qquad L^{i+1}=\big[\mathfrak{nr}(L),L^i\big]\qquad \text{for}\quad i\ge 1 ,
\end{gather}
where $\mathfrak{nr}(L)$ is the nilradical of $L$.
Its name will be justified by the fact that it its main part (i.e., starting from $i=1$) is the lower central series of the nilradical of $L$. The following observation is fundamental. Note a shift in the notation in comparison with the nilpotent case due to the fact that we will have to accept coordinates of degree~$0$.
\begin{Proposition}
For a finite-dimensional $L$ $($over a field of characteristic~$0)$, the nilradical series is central, i.e., $[L,L]\subset\mathfrak{nr}(L)$, if and only if~$L$ is solvable.
\end{Proposition}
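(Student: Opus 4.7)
The plan is to split the equivalence into two implications and reduce each to standard facts about solvable Lie algebras in characteristic zero.

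For the easy direction, suppose $[L,L]\subset\mathfrak{nr}(L)$. Then the quotient $L/\mathfrak{nr}(L)$ has trivial derived algebra, hence is abelian, and in particular solvable. Since $\mathfrak{nr}(L)$ is nilpotent by definition, it is solvable as well, so $L$ is an extension of a solvable algebra by a solvable algebra, and hence itself solvable. This half does not use characteristic~$0$ at all.

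For the other direction, suppose $L$ is solvable. The content we need is the classical theorem, valid in characteristic~$0$ and traditionally attributed to Lie and Cartan: the derived algebra $[L,L]$ of a solvable Lie algebra is nilpotent. A standard route is to pass to the algebraic closure, apply Lie's theorem to the adjoint representation to simultaneously upper-triangularize $\mathrm{ad}\, L$, observe that $\mathrm{ad}\,[L,L]$ then consists of strictly upper-triangular operators, and conclude by Engel's theorem that $[L,L]$ is nilpotent. Since $[L,L]$ is an ideal of $L$ and nilpotent, it lies in the maximal nilpotent ideal $\mathfrak{nr}(L)$, as required.

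The main (and really only) substantive point is the forward implication, whose content is exactly the classical result $L\text{ solvable}\Rightarrow[L,L]\text{ nilpotent}$ in characteristic~$0$; there is nothing genuinely new to prove, and the argument is essentially a citation. The only piece of bookkeeping worth flagging is that this direction genuinely requires characteristic~$0$, which explains that hypothesis in the statement, whereas the converse direction works over any field.
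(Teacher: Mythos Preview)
Your argument is correct and matches the paper's approach: both reduce the equivalence to the classical characterization that, for finite-dimensional Lie algebras in characteristic~$0$, $L$ is solvable if and only if $[L,L]$ is nilpotent. The paper's proof is even terser---it simply observes that $[L,L]\subset\mathfrak{nr}(L)$ is equivalent to $[L,L]$ being nilpotent (a nilpotent ideal lies in the nilradical; conversely a subalgebra of the nilradical is nilpotent), and then cites the classical equivalence with solvability---whereas you spell out both directions and sketch the Lie--Engel argument behind the nontrivial implication, but the substance is the same.
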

\begin{proof}
It is easy to see that the nilradical series is a central series if and only if $[L,L]$ is nilpotent. For finite-dimensional Lie algebras (over a field of characteristic~$0$) this is equivalent to solvability.
\end{proof}
\begin{Remark}
As the main result we are approaching is known to be valid for nilpotent Lie algebras of vector fields, we can in general assume that $L^0=L\ne L^1=\mathfrak{nr}(L)$. It is also obvious that the series (\ref{hcs}) stops at zero, i.e., $L^{k+1}=\{ 0\}$ for some non-negative integer~$k$.
\end{Remark}
\begin{Example} The proposition is not true in infinite-dimension. For instance, consider the Lie algebra $L$ spanned by elements $\langle x,a,h_1,h_2,\dots\rangle$ subject to the Lie bracket
\[ [x,a]=a ,\qquad [x,h_i]=ih_i ,\qquad [a,h_i]=h_{i+1} ,\qquad [h_i,h_j]=0 .\]
The derived ideal $[L,L]$ is spanned by $\langle a,h_1,h_2,\dots\rangle$ and it is solvable but not nilpotent.
\end{Example}

Now, we will realize a (real) finite-dimensional solvable Lie algebra $L$ as a \emph{local transitive Lie algebra of vector fields on ${\mathbb R}^n$}, i.e., a Lie algebra of smooth vector fields defined in a neighbourhood $U$ of $0\in{\mathbb R}^n$ such that the vector fields from $L$ span the whole tangent space ${\mathsf T}_0{\mathbb R}^n$, i.e.,
\[ L(0)={\mathsf T}_0{\mathbb R}^n\simeq{\mathbb R}^n .\]
We can of course equivalently assume that $L$ spans the tangent spaces ${\mathsf T}_x{\mathbb R}^n$ for all $x\in U$.
As the generalized distributions $V^i$ generated in $U$ by $L^i$ are invariant with respect to the infinitesimal action of~$L$, they are invariant with respect to the local Lie group corresponding to $L$ which transitively acts by local diffeomorphisms in a neighbourhood of $0\in{\mathbb R}^n$, say $U$ itself (cf.~\cite{Palais}). The distributions are therefore regular and clearly involutive, so define foliations ${\mathcal F}^i$ on $U$. Thus we get the following.

\begin{Proposition}
If $L$ is a local transitive finite-dimensional solvable Lie algebra of vector fields in a neighbourhood of $0\in{\mathbb R}^n$, then ideals $L^i$ from the nilradical series
\[ L=L^0\vartriangleright L^1\vartriangleright L^2\vartriangleright\cdots\vartriangleright L^k\vartriangleright L^{k+1}=\{ 0\}\]
of $L$ generate a descending series
\begin{gather}\label{fol}{\mathcal F}^0=\{ U\}\supset{\mathcal F}^1\supset{\mathcal F}^2\supset \cdots\supset {\mathcal F}^{k}\supset {\mathcal F}^{k+1}=U \end{gather}
of foliations in a neighbourhood $U$ of $0\in{\mathbb R}^n$.
\end{Proposition}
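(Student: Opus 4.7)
The plan is to verify that each $V^i$, the generalized distribution generated by $L^i$, is simultaneously involutive and of locally constant rank; then Frobenius gives the foliation $\mathcal{F}^i$, and the inclusions $L^{i+1}\subset L^i$ translate directly into the refinement $\mathcal{F}^{i+1}\supset \mathcal{F}^i$ (in the sense of descending sequence of foliations, where each leaf of $\mathcal{F}^{i+1}$ sits inside a leaf of $\mathcal{F}^i$).

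First, I would check by induction that each $L^i$ is in fact an ideal of $L$. The case $i=0$ is trivial and the case $i=1$ uses that $\mathfrak{nr}(L)$ is an ideal. For the inductive step with $L^{i+1}=[\mathfrak{nr}(L),L^i]$, the Jacobi identity together with $[L,\mathfrak{nr}(L)]\subset\mathfrak{nr}(L)$ and $[L,L^i]\subset L^i$ gives $[L,L^{i+1}]\subset L^{i+1}$. Since each $L^i$ is in particular a Lie subalgebra of $L$, the bracket of any two vector fields in $L^i$ lies in $L^i$, so the distribution $V^i$ generated pointwise by evaluating vector fields of $L^i$ is involutive in the (generalized) Stefan--Sussmann sense.

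Second, I would use transitivity together with the ideal property to promote $V^i$ from a generalized distribution to a regular one. Because $L^i\vartriangleleft L$, the flow $\varphi_t^X$ of any $X\in L$ satisfies $(\varphi_t^X)_*(L^i)\subset L^i$ on its domain, so $(\varphi_t^X)_*(V^i(x))=V^i(\varphi_t^X(x))$. By a Palais-type argument (as invoked in the text preceding the statement, cf.~\cite{Palais}) the local diffeomorphisms generated by $L$ form a local Lie group that acts transitively on a neighbourhood $U$ of $0$, so any two points of $U$ are related by such a composed flow. Consequently $\dim V^i(x)$ is constant on $U$, i.e.\ $V^i$ is a regular distribution on $U$. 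Combined with involutivity, the classical Frobenius theorem yields the foliation $\mathcal{F}^i$ of $U$ whose tangent distribution is $V^i$.

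Finally, the inclusions $L=L^0\supset L^1\supset\cdots\supset L^{k+1}=\{0\}$ imply $V^0\supset V^1\supset\cdots\supset V^{k+1}=0$, hence the sequence of foliations $\mathcal{F}^0\supset\mathcal{F}^1\supset\cdots\supset\mathcal{F}^{k+1}$ with $\mathcal{F}^0=\{U\}$ (one leaf, by transitivity of $L$) and $\mathcal{F}^{k+1}$ the foliation into points. The only delicate step is the constancy of $\dim V^i$, where one must take care that the local-group picture really does connect any two points of $U$; granted this, everything else is formal bookkeeping with the ideal structure of the nilradical series.
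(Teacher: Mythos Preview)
Your proposal is correct and follows essentially the same argument as the paper: the paper's proof (given in the paragraph immediately preceding the proposition) observes that the distributions $V^i$ are invariant under the infinitesimal action of $L$, hence under the transitive local group action (citing \cite{Palais}), so they are regular and involutive and define foliations by Frobenius. You have simply unpacked this more explicitly, supplying the inductive verification that each $L^i$ is an ideal and spelling out the flow-invariance argument for constancy of rank.
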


Inclusion ${\mathcal F}^i\supset{\mathcal F}^{i+1}$ means that leaves of foliation ${\mathcal F}^{i+1}$ are submanifolds of the leaves of foliation ${\mathcal F}^{i}$. Note that the series of foliations~(\ref{fol}) need not be strictly decreasing and that we understand that $L^k\ne\{ 0\}$.
\begin{Example}\label{ex2}
Let $L$ be the solvable Lie algebra of vector fields on ${\mathbb R}^2$ spanned by
\[ L=\big\langle \partial_x ,\partial_y ,x\partial_x ,y\partial_y ,y^2\partial_x ,y\partial_x\big\rangle .\]
Then,
\[ L^1=[L,L]=\big\langle \partial_x ,\partial_y ,y^2\partial_x ,y\partial_x\big\rangle ,\]
so ${\mathcal F}^1={\mathcal F}^0=\big\{{\mathbb R}^2\big\}$. Moreover, $L^2=\big[L^1,L^1\big]=\langle\partial_x ,y\partial_x\rangle$ and $L^3=\big[L^1,L^2\big]=\langle\partial_x\rangle$, so ${\mathcal F}^2={\mathcal F}^3$ are generated by $\partial_x$ and $k=3$.
\end{Example}
\begin{Remark}We consider only transitive Lie algebras, since non-transitive ones are much harder to describe. In particular, in the non-transitive case we can have huge infinite-dimensional commutative Lie algebras of vector fields, as for example
the algebra $L=\langle f(y)\partial_x\rangle$ on ${\mathbb R}^2$. Here, $f$ is an arbitrary smooth function of one variable.
Any full description of such algebras seems extremely difficult.
\end{Remark}

Let us fix a local solvable finite-dimensional algebra of vector fields and pick up from the sequence (\ref{fol}) the subsequence
\begin{gather}\label{fol1}
{\mathcal F}^{r_1}\supsetneq{\mathcal F}^{r_2}\supsetneq \cdots\supsetneq {\mathcal F}^{r_m}\supsetneq {\mathcal F}^{r_{m+1}}=U \end{gather}
consisting of those ${\mathcal F}^i$ for which the dimension of ${\mathcal F}^{i+1}$ is smaller than that of ${\mathcal F}^i$. In particular, ${\mathcal F}^{r_m}={\mathcal F}^k$. Put
\begin{gather}\label{adef}a_i=\dim\big({\mathcal F}^{r_i}\big)-\dim\big({\mathcal F}^{r_i+1}\big) ,\qquad i=1,\dots,m .\end{gather}
Of course, $a_i>0$, $a_m=\dim\big({\mathcal F}^k\big)$, and $a_1+\cdots+a_m=n$, so that $0=b_0<b_1<b_2<\cdots <b_m=n$, where $b_j=a_1+\cdots+a_j=\dim\big({\mathcal F}^{r_j}\big)$ for $j>0$.
Hence, the series~(\ref{fol}) looks like
\begin{gather*}
\underbrace{{\mathcal F}^{0}_n=\cdots={\mathcal F}^{r_1}_n}_{r_1+1}\supsetneq
\underbrace{{\mathcal F}^{r_1+1}_{n-b_1}=\cdots={\mathcal F}^{r_2}_{n-b_1}}_{r_2-r_1}\supsetneq
\cdots\supsetneq
\underbrace{{\mathcal F}^{r{_m-1}+1}_{n-b_{m-1}}=\cdots={\mathcal F}^{r_m}_{n-b_{m-1}}}_{r_m-r_{m-1}}
\supsetneq U ,
\end{gather*}
where with the lower indices we indicated the dimension of the leaves of the foliation. In the above example it is a subsequence ${\mathcal F}^1$, ${\mathcal F}^3$, so that $a_1=a_2=1$.

\begin{Proposition}[cf.~\cite{CFG16, CFGR15}]\label{p1}
There are coordinates $y^1,\dots,y^n$ in a neighbourhood of $0\in{\mathbb R}^n$ such that:
\begin{enumerate}\itemsep=0pt
\item[$1.$] The coordinates $y^1,\dots, y^{b_j}$ define ${\mathcal F}^{r_{j+1}}$ as their level sets for each $j=1,\dots,m$.
\item[$2.$] There are $Y_1,\dots,Y_n\in L$ such that, for each $j=1,\dots,m$, the vector fields $Y_{b_{j-1}+1},\dots,Y_n$ generate ${\mathcal F}^{r_j}$ and
\begin{gather}\label{gen} Y_k=\partial_{y^k} \qquad \operatorname{mod}{\mathcal F}^{r_{j+1}} ,\end{gather}
for $k=b_{j-1}+1,\dots,b_j$.
\item[$3.$] The generators of ${\mathcal F}^k$ are the vector fields
\begin{gather}\label{fine} Y_{b_{m-1}+1}=\partial_{y^{b_{m-1}+1}} ,\qquad \dots,\qquad Y_n=\partial_{y^n}\end{gather}
belonging to $L^k$.
\item[$4.$] Moreover, every $X\in L$ can be written as
\begin{gather}\label{form}X=\sum_{j=b_{m-1}+1}^n\left(\sum_{i=b_{m-1}+1}^n f_i^j\big(y^1,\dots,y^{b_{m-1}}\big)y^i+g^j\big(y^1,\dots,y^{b_{m-1}}\big)\right)\partial_{y^j}
+\bar{X} ,
\end{gather}
where the vector field $\bar{X}$ depends on the first $b_{m-1}$ variables only,
\begin{gather*}
\bar{X}=\sum_{i=1}^{b_{m-1}}v^i\big(y^1,\dots,y^{b_{m-1}}\big)\partial_{y^i} .\end{gather*}
If $X\in L^1$, then
\begin{gather*}
X=\sum_{j=b_{m-1}+1}^nu^j\big(y^1,\dots,y^{b_{m-1}}\big)\partial_{y^j}
+\bar{X} .
\end{gather*}

\item[$5.$] The vector fields $\bar{X}$ form a local Lie algebra $\bar{L}$ of vector fields in a neighbourhood $\bar{U}$
of $0\in{\mathbb R}^{b_{m-1}}$ and $L\ni X\mapsto \bar{X}\in\overline{L}$ is a Lie homomorphism onto $\bar{L}$. The algebra $\bar{L}$ is therefore solvable and transitive finite-dimensional Lie algebra of vector fields in $b_{m-1}=k-a_m$ variables.
\end{enumerate}
\end{Proposition}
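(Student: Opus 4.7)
The plan is to prove items 1--5 simultaneously by downward induction on $j = m, m-1, \ldots, 1$, peeling off one foliation level at a time on successive local quotients. The driving observation is that for every $j\ge 1$ one has $[L^{r_j},L^{r_j}]\subset L^{r_j+1}$, and since $L^{r_j+1}(0)=\cdots=L^{r_{j+1}}(0)$ this bracket is tangent to $\mathcal{F}^{r_{j+1}}$; because $L^{r_{j+1}}$ is an ideal in $L$ and therefore preserves $\mathcal{F}^{r_{j+1}}$, the whole of $L^{r_j}$ is $\mathcal{F}^{r_{j+1}}$-projectable and its image $\bar L^{r_j}$ on the local quotient $U/\mathcal{F}^{r_{j+1}}$ is an \emph{abelian} Lie algebra of vector fields acting transitively on the $a_j$-dimensional leaves of $\mathcal{F}^{r_j}/\mathcal{F}^{r_{j+1}}$. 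The same reasoning covers $j=1$ even when $r_1=0$, since then $[L,L]\subset\mathfrak{nr}(L)=L^1$ is still tangent to $\mathcal{F}^{r_2}$.

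Base case $j=m$: $L^k$ is itself abelian because $L^{k+1}=[L^1,L^k]=\{0\}$. Pick $a_m$ vector fields in $L^k$ whose values at $0$ span $L^k(0)$; they commute, so the simultaneous straightening theorem furnishes coordinates $(y^1,\ldots,y^n)$ with $Y_{b_{m-1}+1}=\partial_{y^{b_{m-1}+1}},\ldots, Y_n=\partial_{y^n}$, while $y^1,\ldots,y^{b_{m-1}}$ are free to be chosen as any local coordinates on $U/\mathcal{F}^k$. This delivers item 3. For the inductive step, assume that coordinates and $Y_{b_j+1},\ldots,Y_n$ have already been built at higher levels, so that $y^1,\ldots,y^{b_j}$ descend to coordinates on $U/\mathcal{F}^{r_{j+1}}$. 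Pick commuting $\bar Y_{b_{j-1}+1},\ldots,\bar Y_{b_j}\in\bar L^{r_j}$ that span the quotient distribution at the origin, straighten them to $\partial_{y^{b_{j-1}+1}},\ldots,\partial_{y^{b_j}}$ by redefining only the coordinates $y^{b_{j-1}+1},\ldots,y^{b_j}$, and lift each $\bar Y_k$ to a $Y_k\in L^{r_j}$; the lift automatically satisfies $Y_k=\partial_{y^k}\ \operatorname{mod}\mathcal{F}^{r_{j+1}}$. The coordinate change involves only $y^1,\ldots,y^{b_j}$ and has identity Jacobian with respect to $(y^{b_j+1},\ldots,y^n)$, so every previously established identity $Y_{k'}=\partial_{y^{k'}}$ for $k'>b_j$ is preserved, together with the descent properties of the lower-index coordinates. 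This yields items 1 and 2.

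Item 4 drops out once the $Y_j=\partial_{y^j}\in L^k$, $j>b_{m-1}$, are in place. For any $X\in L$ with components $X^i$ in $(y^1,\ldots,y^n)$, the bracket $[X,\partial_{y^j}]=-\sum_i\partial_{y^j}(X^i)\partial_{y^i}$ lies in the ideal $L^k$ and is therefore tangent to $\mathcal{F}^k$, forcing $\partial_{y^j}(X^i)=0$ for $i\le b_{m-1}$; thus the components with $i\le b_{m-1}$ depend only on $y^1,\ldots,y^{b_{m-1}}$, producing the $\bar X$ piece. Taking one further bracket, $[Y_{j'},[Y_j,X]]\in[L^k,L^k]=\{0\}$ yields $\partial_{y^{j'}}\partial_{y^j}(X^i)=0$ for all $j,j'>b_{m-1}$, so the components $X^i$ with $i>b_{m-1}$ are affine in $y^{b_{m-1}+1},\ldots,y^n$ with coefficients depending only on $y^1,\ldots,y^{b_{m-1}}$. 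When $X\in L^1$ the sharper $[L^1,L^k]=L^{k+1}=\{0\}$ annihilates $\partial_{y^j}(X^i)$ for every $i$, reducing these components to functions of $y^1,\ldots,y^{b_{m-1}}$ alone. Item 5 is then immediate: $X\mapsto\bar X$ is the Lie homomorphism induced by pushforward along the submersion $U\to U/\mathcal{F}^k$, so $\bar L$ inherits solvability and transitivity. The main difficulty the plan must address is the mutual compatibility of the straightenings across levels, and this is precisely what the downward induction with block-triangular coordinate changes secures.
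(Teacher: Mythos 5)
Your proposal is correct in substance, but it runs the construction in the opposite direction and with different tools than the paper, so a comparison is worthwhile. Both arguments hinge on the same algebraic fact, namely $[L^{r_j},L^{r_j}]\subset L^{r_j+1}$ (and $[L,L]\subset\mathfrak{nr}(L)$ when $r_1=0$), i.e., that each consecutive quotient is abelian, and both build the adapted coordinates block by block. The paper starts at the outermost block ($j=1$) and works inward: it chooses $Y_1,\dots,Y_{b_1}\in L^{r_1}$, shows the dual $1$-forms annihilating $V^{r_2}$ are closed, integrates them to get $y^1,\dots,y^{b_1}$, and then repeats the construction leafwise on the leaves of ${\mathcal F}^{r_2}$, fixing the integration constants by requiring the new functions to vanish on a transversal curve so that they depend smoothly on the leaf parameter. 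You instead start at the innermost block ($j=m$, using that $L^k$ is abelian because $[L^1,L^k]=L^{k+1}=0$) and work outward on successive local quotients $U/{\mathcal F}^{r_{j+1}}$, replacing the closed-form argument by the simultaneous flow-box theorem for the commuting projected fields $\bar Y\in\bar L^{r_j}$; working on the quotient neatly sidesteps the smooth-dependence-on-the-leaf issue that the paper handles with the transversal normalization. Your item 4 also differs slightly: the paper uses the explicit form of elements of $L^k$ (commutativity of $L^k$ plus the ideal property), while you use the double brackets $[L^k,[L^k,L]]=0$ and $[L^1,L^k]=0$; these are equivalent. One small internal inconsistency to fix: you first say the straightening at step $j$ redefines \emph{only} $y^{b_{j-1}+1},\dots,y^{b_j}$, which is not possible in general, since the previously chosen complementary coordinates $y^1,\dots,y^{b_{j-1}}$ (arbitrary at the base step) need not be constant on the leaves of the projected ${\mathcal F}^{r_j}$; but your very next sentence allows the change to involve all of $y^1,\dots,y^{b_j}$, and that block-triangular change (choosing the new $y^1,\dots,y^{b_{j-1}}$ as leaf-space coordinates of ${\mathcal F}^{r_j}$) is exactly what is needed and does preserve the identities $Y_{k'}=\partial_{y^{k'}}$ and the descent properties already established for the higher blocks, so the gap is only in the wording, not in the scheme.
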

\begin{proof} The coordinates $y^1,\ldots, y^n$ will be constructed inductively. In the following $V^i$ denotes the distribution spanned by elements of $L^i$ and $L^i(0)=V^i\cap{\mathsf T}_0 U$. In particular $V^{r_1}={\mathsf T} U$ since~$L$ is transitive.
Take $Y_1,\dots,Y_{b_1}\in L^{r_1}$ such that $Y_1(0),\dots,Y_{b_1}(0)$ generate $L^{r_1}(0)$ $\operatorname{mod} L^{r_2}(0)$. Hence, $Y_1,\dots,Y_{b_1}$ span $V^{r_1}$ $\operatorname{mod} V^{r_2}$ in a neighbourhood of $0\in{\mathbb R}^n$.
Let now $\alpha^1,\dots,\alpha^{b_1}$ be 1-forms in a neighbourhood on $0\in{\mathbb R}^n$, vanishing on $V^{r_2}$ and such that
$\alpha^l(Y_s)=\delta^l_s$, $l,s=1,\dots, b_1$. These 1-forms are closed. Indeed,
\[ {\rm d}\alpha^l(Y_s,Y_{s'})=Y_s\big(\alpha^l(Y_{s'})\big)- Y_{s'}\big(\alpha^l(Y_{s})\big)-\alpha^l([Y_s,Y_{s'}])=0 .\] Note that $\alpha^l([Y_s,Y_{s'}])=0$, since
\[ [Y_s,Y_{s'}]\subset \big[L^{r_1},L^{r_1}\big]\subset L^{r_1+1},\]
so that $[Y_s,Y_{s'}]$ is a section of the distribution $V^{r_1+1}=V^{r_2}$.
Similarly, for $X,X'\in L^{r_2}$ we have
\[ {\rm d}\alpha^l(Y_s,X)=0 \qquad\text{and}\qquad {\rm d}\alpha^l(X,X')=0 .\]
As $L^{r_2}$ spans $V^{r_2}$, this completes the proof that ${\rm d}\alpha^l=0$.
We can therefore (uniquely) choose functions $y^1,\dots,y^{b_1}$ vanishing at $0\in{\mathbb R}^n$ such that $\alpha^l={\rm d} y^l$. In particular, $y^j$ are constant on leaves of ${\mathcal F}^{r_2}$, $Y_s\big(y^l\big)=\delta^l_s$. Note that $u=\big(y^1,\ldots, y^{b_1}\big)$ parameterize leaves of ${\mathcal F}^{r_2}$. By $F(u)$ we will denote the leaf of ${\mathcal F}^{r_2}$ corresponding to $u$.

Similarly, we can choose $Y_{b_1+1},\dots,Y_{b_2}\in L^{r_2}$ which span $V^{r_2} \operatorname{mod} V^{r_3}$ in a neighbourhood of $0\in{\mathbb R}^n$. As before, on each leaf $F(u)$ of ${\mathcal F}^{r_2}$ we can find coordinates $y^{b_1+1}_u,\dots,y_u^{b_2}$ which are constant on the leaves of ${\mathcal F}^{r_3}_{|F(u)}$ and such that, on $F(u)$, we have $Y_s\big(y^l_u\big)=\delta^l_s$. The functions $y^{b_1+1}_u,\dots,y_u^{b_2}$ are determined up to a constant on each leaf. If we choose them so that they vanish on a smooth curve through $0\in{\mathbb R}^n$ and transversal to the leaves of ${\mathcal F}^{r_2}$, they smoothly depend on $u$ and define smooth functions $y^{b_1+1},\dots,y^{b_2}$ in a neighbourhood of $0\in{\mathbb R}^n$.

At this point of the construction we have vector fields $Y_1,\dots,Y_{b_2}$ and smooth functions $y^1,\dots,y^{b_2}$. The functions $\big(y^1,\dots,y^{b_1}\big)$ define the foliation ${\mathcal F}^{r_2}$, and ($y^1,\dots,y^{b_2}$) define ${\mathcal F}^{r_3}$. Moreover, $Y_s\big(y^l\big)=\delta^l_s$ for $s,l=1,\dots,b_1$ and for $s,l=b_1+1,\dots,b_2$.
Proceeding inductively in this way, we finally get vector fields $Y_1,\dots,Y_n\in L$ and coordinates $y^1,\dots,y^n$ in a neighbourhood of $0\in{\mathbb R}^n$ with the desired properties.

Finally, from (\ref{gen}) we deduce (\ref{fine}), i.e., $V^{k}=V^{r_m}$ is generated by $Y_{b_{m-1}+1}=\partial_{y^{b_{m-1}+1}}$ up to $Y_n=\partial_{y^n}$. Hence, every $Y\in L^k$ is of the form
\begin{gather}\label{last} Y=\sum_{j=b_{m-1}+1}^n\gamma_j(y)\partial_{y^j} .\end{gather}
But $L^k$ is commutative, so that $[\partial_{y^i},Y]=0$ for $i=b_{m-1}+1,\dots,n$, that implies that $\gamma_j$ depends on $\big(y^1,\ldots,y^{b_{m-1}}\big)$ only. Since, $L^k$ is an ideal, for every $X\in L$
we have that $[\partial_{y^i},X]$ is of the form~(\ref{last}). This, in turn, implies~(\ref{form}) and \begin{gather*}
\overline{[X,Y]}=[\overline{X},\overline{Y}] .\tag*{\qed}\end{gather*}\renewcommand{\qed}{}
\end{proof}

\begin{Example}\label{ex3}
Let $n=1$, which means we are looking for local finite-dimensional solvable Lie algebras $L$ of vector fields on the real line.
As ${\mathcal F}^k=\{{\mathbb R}\}$, the algebra $L^k$ contains $Y_1=\partial_y$. As there are no other variables, according to (\ref{form}),
each $X\in L$ is locally of the form $X=(ay+b)\partial_y$. Hence, $L=\langle\partial_y\rangle$ or $L=\langle\partial_y ,y\partial_y\rangle$. Thus we recovered the classical result known already to S.~Lie. In the first case of $L=\langle\partial_y\rangle$ the algebra is nilpotent, so in view of Theorem~\ref{nil} we associate the degree~$1$ to the variable~$y$. In the second case of $L=\langle\partial_y\rangle$ which is solvable but not nilpotent we also associate the degree~$1$ to the variable~$y$ in preparation for the next section. We have then~$\partial_y$ of degree~$-1$ and $y\partial_y$ of degree~$0$.
\end{Example}

\section{Solvable Lie algebras are dilational}
\begin{Theorem}\label{main}
Let $L$ be a finite-dimensional transitive local solvable Lie algebra of vector fields on ${\mathbb R}^n$ and let $r_i$ and $a_i$, $i=0,\dots,m$, be defined as in \eqref{fol1} and \eqref{adef}. Then, there is a non-negative dilation $($homogeneity structure$)$ on ${\mathbb R}^n$ which can be described in a constructive way such that the first $a_1$ variables are of degree $r_1$, the next $a_2$ variables are of degree $r_2$, etc., up to final $a_m$ variables of degree $r_m$, completely determined $($up to a diffeomorphism$)$ by~$L$, so the eigenvalues and multiplicities of the dilation have invariant meaning.
Moreover, $L\subset\mathfrak{g}^{\le 0}(U,h)$ and
$$L^j\subset \bigoplus_{i=-k}^{-j}\mathfrak{g}^i(U,h) .$$
Actually, there is a finite dimensional commutative subalgebra $\mathfrak{{g}}^{0}_0(U,h)$ in $\mathfrak{{g}}^{0}(U,h)$ such that
\begin{gather}\label{solv} L \subset \mathfrak{g}^{<0}(U,h)\oplus \mathfrak{g}_0^{0}(U,h) .
\end{gather}
\end{Theorem}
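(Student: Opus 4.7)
The plan is induction on $m$, the length of the strictly descending foliation chain~\eqref{fol1}, using Proposition~\ref{p1} as the main structural tool. The base case splits into two pieces: if $L=L^1=\mathfrak{nr}(L)$, i.e., $L$ is nilpotent, the conclusion is Theorem~\ref{nil}, with $\mathfrak g_0^{0}(U,h)=\{0\}$; the remaining degenerate low-dimensional situations are exhibited in Example~\ref{ex3}.

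For the inductive step, fix adapted coordinates $y^1,\dots,y^n$ and generators $Y_1,\dots,Y_n\in L$ as in Proposition~\ref{p1}. The quotient $\bar L$ of $L$ by the abelian ideal $L^k$ is again a transitive solvable Lie algebra, now on the lower $b_{m-1}$ coordinates, with a strictly descending foliation chain of length $m-1$. By induction, $\bar L$ is dilational under some homogeneity structure $\bar h$ on ${\mathbb R}^{b_{m-1}}$ with eigenvalues and multiplicities $(r_1,\dots,r_{m-1})$ and $(a_1,\dots,a_{m-1})$. I extend $\bar h$ to a dilation $h$ on ${\mathbb R}^n$ by assigning weight $r_m=k$ to each of the top $a_m$ coordinates. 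By~\eqref{fine} the generators of $L^k$ then automatically lie in $\mathfrak g^{-r_m}(U,h)$, and every $\bar X\in\bar L$, read on $U$, retains the same weights under $h$ as under $\bar h$.

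The main work is to check that the ``new'' terms in~\eqref{form} sit inside $\mathfrak g^{\le 0}(U,h)$, with $L^j\subset\bigoplus_{i=-k}^{-j}\mathfrak g^i(U,h)$. Concretely this requires two things: the coefficients $f^j_i(y^1,\dots,y^{b_{m-1}})$ must be \emph{constants} (since each $y^i\partial_{y^j}$ with $i,j>b_{m-1}$ already has weight $0$ and the lower variables all carry strictly positive weight), and the coefficients $g^j$ (respectively the $u^j$ appearing for $X\in L^1$) must be weighted polynomials of total degree $\le r_m$ (respectively $<r_m$) in the lower variables. The bounds on $g^j$ and $u^j$ follow by applying Theorem~\ref{nil} to the nilpotent ideal $L^1=\mathfrak{nr}(L)$ and then propagating the resulting weight bounds through~\eqref{form} via the ladder inclusions $[L^1,L^j]\subset L^{j+1}$, which terminate at $L^{k+1}=\{0\}$. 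For the constancy of $f^j_i$, the identity
\[
[X,\partial_{y^l}]=-\sum_{j>b_{m-1}}f^j_l\big(y^1,\dots,y^{b_{m-1}}\big)\,\partial_{y^j}\in L^k
\]
(together with the vanishing of all $f^j_i$ for $X\in L^1$) realises $X\mapsto (f^j_l)$ as a linear representation of the \emph{abelian} Lie algebra $L/L^1$ on the finite-dimensional ideal $L^k$. A real linear refinement of the top coordinates $y^{b_{m-1}+1},\dots,y^n$ bringing this commuting family to simultaneous real Jordan form then renders the $f^j_i$ numerical constants, and one checks that such a refinement preserves all the properties already secured by Proposition~\ref{p1}.

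Finally, the finite-dimensional commutative subalgebra $\mathfrak g_0^{0}(U,h)\subset\mathfrak g^{0}(U,h)$ of~\eqref{solv} is defined as the degree-$0$ image of any vector-space complement to $L^1$ in $L$; its elements commute pairwise because $[L,L]\subset L^1\subset\mathfrak g^{<0}(U,h)$, so their brackets have strictly negative degree and hence their degree-$0$ parts vanish on brackets. The main obstacle I anticipate is the simultaneous Jordan-form step above: over ${\mathbb R}$ one obtains only a block form with $2\times 2$ blocks for complex-conjugate eigenvalues, and it is precisely these surviving complex eigenvalues that foreshadow the exponentials $\exp(\lambda y^i)$ appearing in Lie's conjecture as stated in the introduction.
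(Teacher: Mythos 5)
Your overall architecture --- induct via Proposition~\ref{p1}, pass to the quotient algebra $\bar L$ on ${\mathbb R}^{b_{m-1}}$, extend $\bar h$ by giving the top $a_m$ coordinates weight $r_m$, and then control the extra terms in \eqref{form} --- is the same as the paper's. However, two of your key steps are genuinely off. First, your claim that the coefficients $f^j_i$ must be \emph{constants} rests on the assertion that ``the lower variables all carry strictly positive weight''; that is false precisely in the typical solvable case $r_1=0$, where the first $a_1$ coordinates have weight $0$ (this is the whole reason for passing from positive to non-negative dilations). The theorem neither needs nor claims constancy: it suffices that the $f^j_i$ depend only on the weight-zero coordinates, so that $f^j_i\,y^i\partial_{y^j}\in\mathfrak{g}^0(U,h)$, and this is exactly what the paper extracts from $[\partial_{y^s},X]\in L^k$ combined with the fact that elements of $L^k$ have the form \eqref{form5}, i.e., coefficients depending only on the degree-zero variables. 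Your substitute --- reading $X\mapsto\big(f^j_l\big)$ as a representation of the abelian $L/L^1$ on $L^k$ and taking a simultaneous real Jordan form --- conflates the numerical matrix of $\operatorname{ad}_X$ on the abstract finite-dimensional space $L^k$ with the function-valued matrix $\big(f^j_l\big(y^1,\dots,y^{b_{m-1}}\big)\big)$; a constant linear change of the top coordinates cannot in general turn non-constant coefficient functions into constants, and nothing in your sketch shows it can here. So this step is both unnecessary and unjustified as written.

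Second, your source for the degree bounds on $u^j$ and $g^j$ is ``apply Theorem~\ref{nil} to the nilpotent ideal $L^1=\mathfrak{nr}(L)$''. But $L^1$ is in general \emph{not} transitive (again exactly when $r_1=0$), so Theorem~\ref{nil} does not apply to it on ${\mathbb R}^n$; the paper explicitly notes that this is why the nilpotent result cannot be invoked directly. The needed move, absent from your proposal, is the case split: if $r_1>0$ then $L^1$ is transitive and Theorem~\ref{nil} applies; if $r_1=0$ then $L^1$ is transitive only on each leaf of ${\mathcal F}^1$, one applies the nilpotent theorem leaf-wise, and the leaf parameters $y^1,\dots,y^{a_1}$ cost nothing in degree because they have weight $0$. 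Likewise the bound $\deg g^j\le r_m$ is not a one-line propagation through $[L^1,L^j]\subset L^{j+1}$: the paper runs a downward induction over the blocks, using that $\partial_{y^r}\in L^{r_{s-1}}\ \operatorname{mod}\ L^{r_{s-1}+1}$, hence $[\partial_{y^r},X]\in L^{r_{s-1}}$, together with the inductive hypothesis for $\bar L$, to bound each $\partial_{y^r}g^j$. Finally, your base case is incomplete: with induction on $m$, the case $m=1$ with $L$ non-nilpotent (e.g.\ the translations plus the Euler vector field on ${\mathbb R}^n$, $n>1$) is covered neither by Theorem~\ref{nil} nor by the one-dimensional Example~\ref{ex3}. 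These gaps are fixable along the paper's lines, but as written the proof does not go through.
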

\begin{proof} We shall proceed inductively with respect to $n$. The theorem is true for $n=1$ (see Example \ref{ex3}). Suppose it is true for $n-1$, $n\ge 2$. Let $\big(y^1,\dots,y^n\big)$ be variables described by Proposition \ref{p1}, let $h$ be a non-negative dilation (homogeneity structure) on ${\mathbb R}^n$ such that the first $a_1$ variables $y^1,\dots,y^{a_1}$ are of degree $r_1$, the next $a_2$ variables are of degree $r_2$, etc., up to final $a_m$ variables of degree $r_{m}$. Forgetting the last $a_m$ variables (i.e. passing to the manifold ${\mathbb R}^n/{\mathcal F}^k\subset {\mathbb R}^{b_{m-1}}$ of leaves of the foliation ${\mathcal F}^k$), we obtain a non-negative dilation $\bar{h}$ of ${\mathbb R}^{b_{m-1}}$ and Lie algebra $\bar{L}$ described in item~5 of Proposition~\ref{p1}. By the inductive assumption,
\begin{gather*}
\bar{L} \subset \mathfrak{g}^{<0}\big(\bar{U},\bar{h}\big)\oplus \mathfrak{g}_0^{0}\big(\bar{U},\bar{h}\big)
\end{gather*}
and
\[ \bar{L}^j\subset \bigoplus_{i=-k+1}^{-j}\mathfrak{g}^i\big(\bar{U},\bar{h}\big) ,\qquad j=1,\dots,k-1 .\]
Now, in view of Proposition \ref{p1}, every element of $L$ is of the form
\begin{gather}\label{form4} X=\sum_{j=b_{m-1}+1}^n\left(\sum_{i=b_{m-1}+1}^nf_i^j\big(y^1,\dots,y^{b_{m-1}}\big)y^i +g^j\big(y^1,\dots,y^{b_{m-1}}\big)\right)\partial_{y^j}
+\bar{X} ,
\end{gather}
while elements of $L^1$ are of the form
\begin{gather*} X=\sum_{j=b_{m-1}+1}^n u^j\big(y^1,\dots,y^{b_{m-1}}\big)\partial_{y^j}.
\end{gather*}
It suffices to show that functions $f_i^j\big(y^1,\dots,y^{b_{m-1}}\big)$ depend of coordinates of degree 0 only, so they are of degree zero, $g^j\big(y^1,\dots,y^{b_{m-1}}\big)$ and $u^j\big(y^1,\dots,y^{b_{m-1}}\big)$ are of degree $r_m$ and $r_m-1$ respectively.

Let us start with $L^1$. By definition it is a nilpotent Lie algebra. There are two possibilities: $r_1>0$, i.e., foliation ${\mathcal F}^1$ is trivial in a sense that it has only one leaf equal $U$, or $r_1=0$ so the foliation ${\mathcal F}^1$ is nontrivial. In the first case there are no variables of degree~$0$ and $L^1$ is a~transitive, nilpotent Lie algebra. Therefore we can use Theorem \ref{nil} according to which elements of $L^1$ are of degree at most $-1$, so functions $u^j\big(y^1,\dots,y^{b_{m-1}}\big)$ are of degree at most~\mbox{$r_m-1$}. In the second case $L^1$ is a transitive, nilpotent Lie algebra on each leaf of the foliation ${\mathcal F}^1$, i.e., $u_j\big(y^1,\dots,y^{b_{m-1}}\big)$ is of degree $r_m-1$ for each fixed $y^1=\alpha_1,\dots,y^{a_1}=\alpha_{a_1}$. But $y^1,\dots,y^{a_1}$ are of degree $0$ in this case, so $u_j\big(y^1,\dots,y^{b_{m-1}}\big)$ are of degree $r_m-1$ as a whole. Since variables $\big(y^{b_{m-1}+1},\ldots, y^n\big)$ are of degree $r_m$, this means that
\begin{gather*}
L^1\subset\mathfrak{g}^{<0}(U,h) .
\end{gather*}
Moreover, as $L$ is not nilpotent and $L^1\ne\{0\}$, elements of $L^k$ are of the form
\begin{gather}\label{form5} X=\sum_{j=b_{m-1}+1}^n u_j\big(y^1,\dots,y^{a_1}\big)\partial_{y^j} ,
\end{gather}
where $y^1,\dots,y^{a_1}$ are coordinates of degree 0 (or $u^j$ are constants when there are no coordinates of degree zero).

Take now $X\in L$ as given in (\ref{form4}). Since
\[ [\partial_{y^s},X]=\sum_{j=b_{m-1}+1}^nf_s^j\big(y^1,\dots,y^{b_{m-1}}\big)\partial_{y^j}\in L^k\]
for $s=b_{m-1}+1,\dots,n$, we get according to (\ref{form5}) that $f_s^j\big(y^1,\dots,y^{b_{m-1}}\big)$
are homogeneous of degree 0, i.e.,
\begin{gather*}
f_s^j=f_s^j\big(y^1,\dots,y^{a_1}\big) \qquad \text{for} \quad s,j=b_{m-1}+1,\dots,n .\end{gather*}

What remains now is to prove that $g^j\big(y^1,\dots,y^{b_{m-1}}\big)$ are polynomial of degree $\le r_m$. We will do it inductively, proving that, for each $s=1,\dots,m$, the derivatives
$\partial_{y^r}g_j$ where $r=b_{s-1}+1,\dots,b_s$, are polynomial of degree $\le r_m-r_s$. This is trivially true for $s=m$,
as $g_j$ do not depend on variables $y^{b_{m-1}+1},\dots, y^n$. Suppose it is true for $s>1$, we will show that it true for $s-1$. Since $[\partial_{y^r},X]\in L^{r_{s-1}}$ for $r=b_{s-2}+1,\dots,b_{s-1}$ (recall that such $\partial_{y^r}\in L^{r_{s-1}} \operatorname{mod} L^{r_{s-1}+1})$, this implies that
\[\partial_{y^r}g_j\big(y^1,\dots,y^{b_{m-1}}\big)\]
are polynomials of degree $\le r_m-r_{s-1}$.
Now it is easy to see that, as $\partial_{y^r}g_j\big(y^1,\dots,y^{b_{m-1}}\big)$ is polynomial of degree $r_m-\deg(\partial_{y^r})$ for each $r$, the function $g_j\big(y^1,\dots,y^{b_{m-1}}\big)$ itself is polynomial of degree $\le r_m$.

Thus we have shown that $L\subset\mathfrak{g}^{\le 0}(U,h)$. Let $\mathfrak{g}^{0}_0(U,h)$ be the projection of $L$ onto $\mathfrak{g}^{0}(U,h)$. Then we have (\ref{solv}) and $\mathfrak{g}^{0}_0(U,h)$ is commutative.
Indeed, it follows from $[L,L]=L^1\subset\mathfrak{g}^{< 0}(U,h)$ and that $\mathfrak{g}^{0}(U,h)$ is a Lie algebra of vector fields. Since $L^1\subset \bigoplus_{i=-k}^{-1}\mathfrak{g}^i(U,h)$ and $L^j=\big[L^1,\big[L^1\big[\dots,\big[L^1,L^1\big]\dots\big]\big]\big]$ for $j>1$, we get
\[L^j\subset \bigoplus_{i=-k}^{-j}\mathfrak{g}^i(U,h) .\tag*{\qed}\]
\renewcommand{\qed}{}
\end{proof}

Note that finite-dimensionality was assumed to achieve two properties: the fact that $[L,L]$ is nilpotent and that~${\mathcal F}^i$ are regular foliations. Our result remains therefore valid for transitive solvable Lie algebras of vector fields (non-necessarily finite-dimensional) if we assure these two properties.
\begin{Example} Take the Lie algebra from Example~\ref{1.1}. It is infinite-dimensional.
\end{Example}

The second required property is valid for instance for Lie algebras of analytic vector fields, as we have in this case a local transitive action of local diffeomorphism which can be written as convergent series of Lie bracket operations, so the distributions spanned by~$L^i$ are regular and integrable (cf.~\cite{nagano66}). In the smooth category, however, solvable Lie algebras of vector fields need not to define the corresponding foliations.
\begin{Example}
Let $L$ be the Lie algebra of vector fields spanned in ${\mathbb R}^2$ by vector fields $\langle f(y)\partial_x{,}\partial_y\rangle,\!$ where $f(y)$ can be taken as an arbitrary smooth function on ${\mathbb R}^2$ which is 1 for $y>0$. The algebra~$L$ is clearly solvable, but the distribution generated by $[L,L]$ is 0 for $y\ge 0$ and spanned by~$\partial_x$ for~$y<0$. It is not integrable.
\end{Example}
Thus we get.
\begin{Theorem}Theorem~{\rm \ref{main}} remains valid for transitive solvable Lie algebras $L$ of $($real$)$ analytic vector fields such that $[L,L]$ is nilpotent.
\end{Theorem}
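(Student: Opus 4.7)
The plan is to revisit the proof of Theorem~\ref{main} and verify that, under the two hypotheses of analyticity and nilpotency of $[L,L]$, every step goes through without invoking finite-dimensionality of $L$ itself. A careful reading of that proof shows that finite-dimensionality was used at exactly two points: to deduce that $[L,L]$ is nilpotent (via the standard equivalence between solvability and nilpotency of the derived ideal in finite dimensions), and to guarantee that the distributions $V^i$ generated by the ideals $L^i$ are regular, so as to produce the foliations $\mathcal{F}^i$ out of which the whole geometric construction is built.

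For the first point, nilpotency of $[L,L]$ is assumed outright, so the nilradical series $L \supset L^1 \supset L^2 \supset \cdots$ is still central and eventually reaches zero. For the second point, I would invoke Nagano's theorem: the distribution generated by a family of real-analytic vector fields is integrable and its orbits form regular analytic foliations (cf.~\cite{nagano66}), because the orbit through a point is described by convergent series of iterated Lie brackets, which in the analytic category yields a well-defined local dimension. Applying this to each $L^i$ produces the descending chain of analytic foliations $\mathcal{F}^i$ needed for Proposition~\ref{p1}.

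With the two ingredients secured, the construction in Proposition~\ref{p1} and the inductive proof of Theorem~\ref{main} proceed verbatim, now in the analytic category: one chooses local analytic coordinates $y^1,\ldots,y^n$ adapted to the chain of foliations, writes each $X \in L$ in the form \eqref{form4}, and induces on the dimension $n$ of the underlying manifold rather than on the dimension of $L$. The quotient algebra $\bar{L}$ on $\mathbb{R}^{b_{m-1}}$ inherits transitivity, solvability and analyticity automatically, and its derived ideal $[\bar{L},\bar{L}]$, being a homomorphic image of the nilpotent $[L,L]$, remains nilpotent; so the inductive hypothesis applies. The nilpotent algebra $L^1$ restricted to a leaf of $\mathcal{F}^1$ is transitive nilpotent of analytic vector fields, hence by Theorem~\ref{nil} it is automatically finite-dimensional and polynomial in suitable homogeneous coordinates; this is the only place where finite-dimensionality appears, and now as a conclusion rather than a hypothesis. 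The polynomial degree bounds on $f_i^j$, $g^j$ and $u^j$ follow exactly as before, and the inclusions $L \subset \mathfrak{g}^{\le 0}(U,h)$ and $L^j \subset \bigoplus_{i=-k}^{-j}\mathfrak{g}^i(U,h)$ are obtained by the same identity $L^j = [L^1,[L^1,\ldots,[L^1,L^1]\ldots]]$.

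The principal obstacle I expect is precisely the regularity of the foliations $\mathcal{F}^i$, since in the smooth category the example $\langle f(y)\partial_x,\partial_y\rangle$ recalled just before the statement shows that the distribution generated by $[L,L]$ can jump in dimension. The analytic hypothesis is exactly what rules out such pathologies via Nagano's theorem; once this is accepted, the rest of the argument is bookkeeping that duplicates the proof of Theorem~\ref{main} word for word, with ``polynomial'' and ``homogeneous'' understood in the analytic sense.
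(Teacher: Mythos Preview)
Your proposal is correct and follows exactly the paper's own argument: the authors note in the paragraph preceding the theorem that finite-dimensionality was used only to obtain nilpotency of $[L,L]$ and regularity of the foliations~$\mathcal{F}^i$, and that in the analytic setting the latter follows from Nagano's theorem~\cite{nagano66}. Your write-up supplies more detail than the paper (checking that $\bar{L}$ inherits the hypotheses, that Theorem~\ref{nil} applies on leaves of~$\mathcal{F}^1$, and that the induction is on~$n$), but the approach is identical.
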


\section{Lie's conjecture}
Sophus Lie conjectured (see~\cite{Lie3}), that for any finite-dimensional complex transitive Lie algebra~$L$ of vector fields in~${\mathbb R}^n$ one can find coordinates $y^1,\dots,y^n$ in which all coefficients
of vector fields from $L$ lie in the algebra generated by the~$y^i$ and the exponentials $\exp\big(\lambda y^i\big)$. For instance, we know already that nilpotent transitive Lie algebras of vector fields are polynomial in certain coordinates. This is no longer true for solvable algebras.

Draisma in \cite{Draisma02a} (see also \cite{Draisma02}) proved a version of the Lie's conjecture for a class of Lie algebras of vector fields including solvable transitive algebras. Being a nice result this is however not the exact version of the Lie's conjecture, as it speaks only about the existence of certain `good' realizations, while in the Lie's conjecture the realization is already given and we look only for ``good'' appropriate coordinates. We propose a proof of the exact and constructive version of the Lie's conjecture for solvable algebras based on our description of the structure of transitive solvable algebras of vector fields.

Since finite-dimensional solvable and transitive Lie algebras of vector fields $L$ are dilational, we have
coordinates $x^1,\dots,x^p$ of zero degree (weight) and coordinates of positive degree (weight) $z^1_{w_1},\dots,z^q_{w_q}$, where $w_i>0$ indicates the weight, such that the coefficients of vector fields from $L$
are polynomials with coefficients in functions depending on $x$:
\[ f(x,y)=\sum_{|\alpha|\le w}f_\alpha(x)\big(z_{w_1}^1\big)^{\alpha_1}\cdots \big(z_{w_q}^q\big)^{\alpha_q} .\]
As $\partial_{x^1},\dots,\partial_{x^p}$ belong to $L$ $\operatorname{mod} L^1$, for any $X\in L$ we can consider the iterated brackets
$\operatorname{ad}^s(\partial_{x^i})(X)$. But $L$ is finite-dimensional so a finite number of the iterated brackets are linearly dependent. Inductively with respect to $w=|\alpha|$ we can show therefore that a finite number of
$\partial^s_{x^i}(f_\alpha(x))$ is linearly dependent. This leaves to a system of linear equations
\[ \sum_{s,|\alpha|=w} c_s^\alpha\frac{\partial^s f_\alpha}{(\partial x^i)^s}(x)=0 .\]
Its complex solutions $f_\alpha(x)$ are in the associative algebra generated by exponentials $\exp\big(\lambda x^i\big)$.
Thus we have proved the following.
\begin{Theorem}
Finite-dimensional solvable and transitive Lie algebras of vector field verify the Lie's conjecture.
\end{Theorem}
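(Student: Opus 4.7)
My plan is to exploit the dilational structure established in Theorem \ref{main}. By that theorem, after choosing suitable coordinates $\big(x^1,\dots,x^p,z^1_{w_1},\dots,z^q_{w_q}\big)$, every vector field in $L$ has coefficients that are polynomial in the positive-weight coordinates $z$ with smooth coefficients $f_\alpha(x)$ in the zero-weight coordinates $x$. The target is therefore reduced: one only needs to show that each function $f_\alpha(x)$ lies in the associative algebra generated by $x^1,\dots,x^p$ together with the exponentials $\exp\big(\lambda x^i\big)$.

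The key mechanism is finite-dimensionality of $L$ combined with the fact that the vector fields $\partial_{x^i}$ belong to $L$ modulo $L^1$, so modulo the nilpotent ideal $L^1\subset\mathfrak{g}^{<0}(U,h)$. Given any $X\in L$ and any index $i$, I would consider the sequence of iterated brackets $\operatorname{ad}^s(\partial_{x^i})(X)$. The essential observation is that bracketing with $\partial_{x^i}$ preserves the polynomial structure in $z$ and simply differentiates the $f_\alpha$ in $x^i$. Since $L$ is finite-dimensional, only finitely many such iterated brackets can be linearly independent, producing a linear relation that can be organized by weight $|\alpha|$.

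By induction on $|\alpha|$, this relation refines to a constant-coefficient linear ODE in $x^i$ for each $f_\alpha$. The top-weight part of the global relation is itself an ODE in $x^i$ for the top-weight coefficients alone; once these are known to satisfy such an equation, substituting back yields ODEs for the next-lower weight coefficients, and so on down to $|\alpha|=0$. Repeating the argument for every $i$ shows that each $f_\alpha$ is annihilated by a constant-coefficient linear differential operator in each variable $x^i$ separately. Since the complex solution space of any such single-variable constant-coefficient linear ODE consists precisely of linear combinations of products of polynomials in $x^i$ with exponentials $\exp\big(\lambda x^i\big)$, and since these spaces combine multiplicatively across the independent variables $x^1,\dots,x^p$, one recovers exactly the conclusion of Lie's conjecture.

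The main obstacle I expect is making the induction on the weight $|\alpha|$ precise and controlling the discrepancy between $\partial_{x^i}$ itself and an honest element $Y_i\in L$ that represents $\partial_{x^i} \operatorname{mod} L^1$. Because $Y_i-\partial_{x^i}\in L^1\subset\mathfrak{g}^{<0}(U,h)$, its contributions to each iterated bracket strictly lower the weight in $z$ and can therefore be absorbed into the lower-weight part of the induction rather than obstructing it; verifying this book-keeping carefully, and confirming that the linear dependence among iterated brackets of $X$ descends to a genuine ODE on $f_\alpha$ (and not merely on some combination involving lower-weight coefficients), is where the argument needs the most care.
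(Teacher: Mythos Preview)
Your proposal is correct and follows essentially the same route as the paper: reduce via Theorem~\ref{main} to showing that the coefficient functions $f_\alpha(x)$ are exponential-polynomials, then use finite-dimensionality of $L$ together with the iterated brackets $\operatorname{ad}^s(\partial_{x^i})(X)$ and an induction on the weight $|\alpha|$ to produce constant-coefficient linear ODEs for each $f_\alpha$. If anything, you are more careful than the paper on the one delicate point it leaves implicit, namely that one must actually bracket with $Y_i\in L$ rather than with $\partial_{x^i}$ itself, and that the discrepancy $Y_i-\partial_{x^i}\in L^1\subset\mathfrak g^{<0}(U,h)$ only contributes to strictly lower weights and is therefore absorbed by the induction.
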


\begin{Example}Consider the Lie algebra of vector fields (\ref{ida}), where ${\mathbb A}=\{ f(x)\colon f\in C^\infty({\mathbb R})\}$, and its subalgebra $L$ spanned by vector fields with some exponential coefficients with respect to~$x$:
\begin{gather*}
\langle \partial_x, y\partial_y, z\partial_z, u\partial_u, \sin(x)\partial_u, \cos(x)\partial_u, \mathrm{e}^xy\partial_u,\partial_z, \mathrm{e}^{2x}\partial_u, \partial_u, \\
 \qquad{} \cos(x)z\partial_u, \sin(x)z\partial_u, y^2\partial_u, y\partial_z, \partial_y, \sin(x)y\partial_u, \cos(x)y\partial_u,y\partial_u\rangle .
\end{gather*}
It is indeed a finite-dimensional and solvable Lie algebra, since $L^1=[L,L]\subset L$:
\begin{gather*}
L^1=\big\langle \sin(x)\partial_u, \cos(x)\partial_u, \mathrm{e}^xy\partial_u, \partial_u, \partial_z, \mathrm{e}^{2x}\partial_u,\\
\hphantom{L^1=\big\langle}{} \cos(x)z\partial_u, \sin(x)z\partial_u, y^2\partial_u, y\partial_z, \partial_y, \sin(x)y\partial_u, \cos(x)y\partial_u,y\partial_u\big\rangle .
\end{gather*}
We have in turn
\begin{gather*}
L^2=\langle \sin(x)\partial_u, \cos(x)\partial_u, \partial_y, \sin(x)y\partial_u, \cos(x)y\partial_u,y\partial_u, \partial_u, \partial_z\rangle ,\\
L^3=\langle \sin(x)\partial_u, \cos(x)\partial_u,\partial_u\rangle
\end{gather*}
and
\[ L^4=\langle 0\rangle .\]
\end{Example}

\section{Concluding remarks}
Dilations, or more generally homogeneity structures which can be viewed as generalizations of vector bundles \cite{Bruce:16, Grabowski:2012}, provide a powerful method for very simple construction of solvable Lie algebras of vector fields. We just take vector fields of negative degrees with respect to the homogeneity structure and add a commutative Lie algebra of vector fields of degree $0$. We have shown that any finite-dimensional solvable and transitive Lie algebra of vector fields $L$ (or Lie algebra $L$ of analytical vector fields such that $[L,L]$ is nilpotent) induces locally the corresponding homogeneity structure which contains $L$ as a Lie subalgebra of vector fields of degrees $\le 0$. This is a natural generalization of the corresponding description of nilpotent transitive Lie algebras of vector fields \cite{JG90}. However, we could not just use directly \cite{JG90}, as $[L,L]$ is in general no longer transitive.

We can view the Lie algebras of vector fields of degrees $\le 0$ (resp.~$<0$) with respect to a fixed homogeneity structure as `completions' of the given solvable (nilpotent) transitive Lie algebras of vector fields, and study their algebraic properties associated with such a description (e.g., Tanaka prolongations).

\subsection*{Acknowledgements}
We thank the anonymous referees for their careful reading of our manuscript and their insightful comments.
Following their suggestions, we included several improvements in the manuscript.
Research founded by the Polish National Science Centre grant under the contract number 2016/22/M/ST1/00542.

\pdfbookmark[1]{References}{ref}
\LastPageEnding

\end{document}